\theoremstyle{plain}
\newtheorem{thm}{Theorem}[section]
\newtheorem{lem}[thm]{Lemma}
\newtheorem{cor}[thm]{Corollary}
\theoremstyle{definition}
\newtheorem{defn}[thm]{Definition}
\newtheorem*{alg*}{Algorithm}
\theoremstyle{remark}
\newtheorem{rem}[thm]{Remark}
\newcommand{\iso}{\cong}
\newcommand{\NN}{\mathbb{N}}
\newcommand{\ZZ}{\mathbb{Z}}
\newcommand{\CC}{\mathbb{C}}
\newcommand{\weyl}{\Delta}
\newcommand{\dweyl}{\nabla}
\newcommand{\modcat}[1]{{#1}{\rm -mod}}
\DeclareMathOperator{\soc}{soc}
\DeclareMathOperator{\rad}{rad}
\DeclareMathOperator{\head}{head}
\DeclareMathOperator{\low}{lower}
\DeclareMathOperator{\up}{upper}
\DeclareMathOperator{\midd}{mid}
\DeclareMathOperator{\pr}{pr}
\begin{document}

\title{Balanced semisimple filtrations for tilting modules}
\author{Amit Hazi}
\date{9 October 2015}
\address{Department of Pure Mathematics and Mathematical Statistics\\
Centre for Mathematical Sciences\\
University of Cambridge\\
Wilberforce Road\\
Cambridge\\
CB3 0WB \\
United Kingdom}
\email{A.Hazi@dpmms.cam.ac.uk}
\subjclass[2010]{20G42}

\begin{abstract}
Let $U_l$ be a quantum group at an $l$th root of unity. Many tilting modules for $U_l$ have been shown to have what we call a balanced semisimple filtration, or a Loewy series whose semisimple layers are symmetric about some middle layer. The existence of such filtrations suggests a remarkably straightforward algorithm for calculating these characters if the irreducible characters are already known. We first show that the results of this algorithm agree with Soergel's character formula for the regular tilting modules. We then show that these balanced semisimple filtrations really do exist for these tilting modules.
\end{abstract}

\maketitle

\section*{Introduction}

Let $U_l$ be a quantized universal enveloping algebra at an $l$th root of unity, corresponding to some complex semisimple Lie algebra. The tilting modules $T_l(\lambda)$ of $U_l$ are classified by highest weight $\lambda$ and are characterized by having filtrations by Weyl modules and dual Weyl modules. We are interested in calculating their Loewy series and determining their structure in general.

Andersen and Kaneda showed that $T_l(\lambda)$ is rigid (i.e.~has identical radical and socle series) for $\lambda$ sufficiently high \cite{andersen-kaneda}. In particular, because of self-duality this implies that if the Loewy length of $T_l(\lambda)$ is $2N+1$, we have $\rad_{i+N} T_l(\lambda) \iso \rad_{i-N} T_l(\lambda)$ for any $i$. In other words, the Loewy series is symmetric about the middle layer containing $L_l(\lambda)$. We call such Loewy series \emph{balanced}. Additionally the examples in \cite{andersen-kaneda} and in previous work by Bowman-Doty-Martin \cite{bdm-small,bdm-large} and the author \cite{rigid-tilting} show that the unique Loewy series is compatible with certain Loewy series of the Weyl modules (called dual parity filtrations). This suggests the following algorithm for calculating the character of $T_l(\lambda)$ given the characters of the Weyl modules of weight up to $\lambda$.

\begin{alg*} \hfill
\begin{enumerate}[label=\arabic*.]
\item Write the dual parity filtration of the Weyl module $\weyl_l(\lambda)$. View this as a partial Loewy series for $T_l(\lambda)$ (namely the bottom layers). We will reflect Loewy layers about the ``middle'' Loewy layer in which $L_l(\lambda)$ appears.

\item Pick the highest ``unbalanced'' weight; that is, the largest $\mu<\lambda$ such that $L_l(\mu)$ appears in the series below $L_l(\lambda)$ but there is no corresponding factor $L_l(\mu)$ in the reflected layer above $L_l(\lambda)$. \label{item:restart}

\item Add the dual parity filtration of $\weyl(\mu)$ to the partial Loewy series so that the head of $\weyl(\mu)$ is in the reflected Loewy layer above $L_l(\lambda)$.

\item Repeat from Step \ref{item:restart} until the Loewy series is balanced.
\end{enumerate}
\end{alg*}

For example, the algorithm applied to $T_l(9)$ for the quantum group corresponding to the root system $B_2$ \cite{andersen-kaneda} yields the following partial Loewy series (note that in these pictures we omit the notation $L_l(-)$).

\begin{align*}
\begin{tikzpicture}[>=angle 90, baseline]
\tikzstyle{weylcircles}=[gray, thin, dashed]
\matrix(m)[matrix of math nodes,
row sep=1em, column sep=0.75em,
text height=0.75ex, text depth=0.125ex, ampersand replacement=\&,font=\scriptsize]
{
    \& \&  \& \& \&  \\
    \&  \&  \&  \&  \&  \\
     \&  \&  \&  \&  \& 9 \\
    \&  \& 7 \& 6 \& 3 \& 1 \\
    \& \& \& 2 \& 5 \& \\};
\draw[weylcircles, rounded corners] (m-5-4.south west) -| (m-4-5.south east) -| (m-3-6.north east) -| (m-4-6.north west) -| (m-4-3.south west) -| (m-5-4.south west) -- cycle;
%
\end{tikzpicture} & \longrightarrow
\begin{tikzpicture}[>=angle 90, baseline]
\tikzstyle{weylcircles}=[gray, thin, dashed]
\matrix(m)[matrix of math nodes,
row sep=1em, column sep=0.75em,
text height=0.75ex, text depth=0.125ex, ampersand replacement=\&,font=\scriptsize]
{
    \& \&  \& \& \&  \\
    \& 7 \&  \&  \&  \&  \\
    5 \& 4 \& 2 \&  \&  \& 9 \\
    \& 3 \& \cancel{7} \& 6 \& 3 \& 1 \\
    \& \& \& 2 \& 5 \& \\};
\draw[weylcircles, rounded corners] (m-5-4.south west) -| (m-4-5.south east) -| (m-3-6.north east) -| (m-4-6.north west) -| (m-4-3.south west) -| (m-5-4.south west) -- cycle;
\draw[weylcircles, rounded corners] (m-4-2.south west) -| (m-3-2.south east) -| (m-3-3.north east) -| (m-2-2.north east) -| (m-3-2.north west) -| (m-3-1.south west) -| (m-4-2.south west) -- cycle;
%
\end{tikzpicture} \longrightarrow \\
\longrightarrow \begin{tikzpicture}[>=angle 90, baseline]
\tikzstyle{weylcircles}=[gray, thin, dashed]
\matrix(m)[matrix of math nodes,
row sep=1em, column sep=0.75em,
text height=0.75ex, text depth=0.125ex, ampersand replacement=\&,font=\scriptsize]
{
    \& \&  \& \& \&  \\
    \& 7 \&  \& 6 \&  \&  \\
    5 \& 4 \& 2 \& 5 \&  \& 9 \\
    \& 3 \& \cancel{7} \& \cancel{6} \& 3 \& 1 \\
    \& \& \& 2 \& 5 \& \\};
\draw[weylcircles, rounded corners] (m-5-4.south west) -| (m-4-5.south east) -| (m-3-6.north east) -| (m-4-6.north west) -| (m-4-3.south west) -| (m-5-4.south west) -- cycle;
\draw[weylcircles, rounded corners] (m-4-2.south west) -| (m-3-2.south east) -| (m-3-3.north east) -| (m-2-2.north east) -| (m-3-2.north west) -| (m-3-1.south west) -| (m-4-2.south west) -- cycle;
\draw[weylcircles, rounded corners] (m-3-4.south west) -| (m-2-4.north east) -| (m-3-4.south west) -- cycle;
%
\end{tikzpicture} & \longrightarrow 
\begin{tikzpicture}[>=angle 90, baseline]
\tikzstyle{weylcircles}=[gray, thin, dashed]
\matrix(m)[matrix of math nodes,
row sep=1em, column sep=0.75em,
text height=0.75ex, text depth=0.125ex, ampersand replacement=\&,font=\scriptsize]
{
    \& \&  \& \& \& 5 \\
    \& 7 \&  \& 6 \&  \& 3 \\
    5 \& 4 \& 2 \& 5 \&  \& 9 \\
    \& 3 \& \cancel{7} \& \cancel{6} \& \cancel{3} \& 1 \\
    \& \& \& 2 \& \cancel{5} \& \\};
\draw[weylcircles, rounded corners] (m-5-4.south west) -| (m-4-5.south east) -| (m-3-6.north east) -| (m-4-6.north west) -| (m-4-3.south west) -| (m-5-4.south west) -- cycle;
\draw[weylcircles, rounded corners] (m-4-2.south west) -| (m-3-2.south east) -| (m-3-3.north east) -| (m-2-2.north east) -| (m-3-2.north west) -| (m-3-1.south west) -| (m-4-2.south west) -- cycle;
\draw[weylcircles, rounded corners] (m-3-4.south west) -| (m-2-4.north east) -| (m-3-4.south west) -- cycle;
\draw[weylcircles, rounded corners] (m-2-6.south west) -| (m-1-6.north east) -| (m-2-6.south west) -- cycle;
%
\end{tikzpicture} \longrightarrow \\
\longrightarrow \begin{tikzpicture}[>=angle 90, baseline]
\tikzstyle{weylcircles}=[gray, thin, dashed]
\matrix(m)[matrix of math nodes,
row sep=1em, column sep=0.75em,
text height=0.75ex, text depth=0.125ex, ampersand replacement=\&,font=\scriptsize]
{
    \& \&  \& \& \& 5 \\
    \& 7 \&  \& 6 \& 3 \& 3 \\
    5 \& 4 \& 2 \& 5 \& 2 \& 9 \\
    \& \cancel{3} \& \cancel{7} \& \cancel{6} \& \cancel{3} \& 1 \\
    \& \& \& 2 \& \cancel{5} \& \\};
\draw[weylcircles, rounded corners] (m-5-4.south west) -| (m-4-5.south east) -| (m-3-6.north east) -| (m-4-6.north west) -| (m-4-3.south west) -| (m-5-4.south west) -- cycle;
\draw[weylcircles, rounded corners] (m-4-2.south west) -| (m-3-2.south east) -| (m-3-3.north east) -| (m-2-2.north east) -| (m-3-2.north west) -| (m-3-1.south west) -| (m-4-2.south west) -- cycle;
\draw[weylcircles, rounded corners] (m-3-4.south west) -| (m-2-4.north east) -| (m-3-4.south west) -- cycle;
\draw[weylcircles, rounded corners] (m-2-6.south west) -| (m-1-6.north east) -| (m-2-6.south west) -- cycle;
\draw[weylcircles, rounded corners] (m-3-5.south west) -| (m-2-5.north east) -| (m-3-5.south west) -- cycle;
%
\end{tikzpicture} & \longrightarrow \begin{tikzpicture}[>=angle 90, baseline]
\tikzstyle{weylcircles}=[gray, thin, dashed]
\matrix(m)[matrix of math nodes,
row sep=1em, column sep=0.75em,
text height=0.75ex, text depth=0.125ex, ampersand replacement=\&,font=\scriptsize]
{
    \& \& 2 \& \& \& 5 \\
    \& 7 \& 1 \& 6 \& 3 \& 3 \\
    5 \& 4 \& 2 \& 5 \& 2 \& 9 \\
    \& \cancel{3} \& \cancel{7} \& \cancel{6} \& \cancel{3} \& \cancel{1} \\
    \& \& \& \cancel{2} \& \cancel{5} \& \\};
\draw[weylcircles, rounded corners] (m-5-4.south west) -| (m-4-5.south east) -| (m-3-6.north east) -| (m-4-6.north west) -| (m-4-3.south west) -| (m-5-4.south west) -- cycle;
\draw[weylcircles, rounded corners] (m-4-2.south west) -| (m-3-2.south east) -| (m-3-3.north east) -| (m-2-2.north east) -| (m-3-2.north west) -| (m-3-1.south west) -| (m-4-2.south west) -- cycle;
\draw[weylcircles, rounded corners] (m-3-4.south west) -| (m-2-4.north east) -| (m-3-4.south west) -- cycle;
\draw[weylcircles, rounded corners] (m-2-6.south west) -| (m-1-6.north east) -| (m-2-6.south west) -- cycle;
\draw[weylcircles, rounded corners] (m-3-5.south west) -| (m-2-5.north east) -| (m-3-5.south west) -- cycle;
\draw[weylcircles, rounded corners] (m-2-3.south west) -| (m-1-3.north east) -| (m-2-3.south west) -- cycle;
\end{tikzpicture}
\end{align*}

In this paper we first prove that this na\"{i}ve algorithm in fact works for all regular tilting modules (not just rigid ones) at the level of characters. The key ingredients in this proof are Lusztig's character formula, which is true for almost all $l$ in the case of quantum groups \cite{KL1-2,KL3,KL4,lusztig-nonsym} and Soergel's tilting character formula \cite{soergel-KL,soergel-kacmoody}. In the final section we prove that the balanced semisimple filtrations alluded to above really do exist for all regular tilting modules. In the future we hope to use similar methods to find a general character formula for tilting modules in the modular case.

\section{Quantum groups at roots of unity}

Let $R$ be a root system for a Euclidean space $E$ of dimension $n$, and let $A_R$ be the Cartan matrix associated to this root system. Let $q$ be an indeterminate in the ring $\mathcal{A}=\ZZ[q^{\pm 1}]$. Write $U_\mathcal{A}$ for the Lusztig integral form quantum group associated to the Cartan matrix $A_R$. This quantum group is a Hopf algebra over $\mathcal{A}$ with algebra generators $E_i^{(r)},F_i^{(r)},K_i^{\pm 1}$ ranging over $i=1,\dotsc,n$ and $r \in \NN$.

Now let $l \in \NN$ be an odd positive integer (with $l \neq 3$ if $R$ has a $G_2$-component). Set $\zeta=e^{2\pi i/l} \in \CC$, a primitive $l$th root of unity. We can make $\mathcal{A}$ into a commutative $\CC$-algebra by specializing $q$ to $\zeta$. This leads to a specialization $U_l=\CC \otimes_\mathcal{A} U_\mathcal{A}$ of our quantum group at $\zeta$.

We will restrict ourselves to the study of finite-dimensional $U_l$-modules of type $\mathbf{1}$ (see \cite[H.10]{jantzen} for a precise definition). The representation theory of $U_l$-modules is analogous to the representation theory of an algebraic group $G$ with root system $R$ over a field of characteristic $l$. In particular, if $R^+$ denotes the set of positive roots, and we define
\begin{align*}
X& =\{\lambda \in E: \langle\lambda,\alpha^\vee \rangle \in \ZZ \text{ for all } \alpha \in R^+\} \\
X^+& =\{\lambda \in E: \langle\lambda,\alpha^\vee \rangle \in \ZZ_{\geq 0} \text{ for all } \alpha \in R^+\}
\end{align*}
to be the sets of integral and dominant integral weights respectively, then for each $\lambda \in X^+$ we have $U_l$-modules $\dweyl_l(\lambda)$ and $\weyl_l(\lambda)$. The module $L_l(\lambda)=\soc \dweyl_l(\lambda) \iso \weyl_l(\lambda)/\rad \weyl_l(\lambda)$ is simple, and all simple modules are of this form. Moreover, familiar results from the theory of algebraic groups (including Kempf's vanishing theorem) carry over for these $U_l$-modules. This means that we can define the indecomposable tilting module $T_l(\lambda)$, in a manner completely analogous to the $G$-modules case, as the unique indecomposable module with a $\weyl$-filtration and a $\dweyl$-filtration with highest weight $\lambda$.

The affine Weyl group $\mathcal{W}$ is defined to be $l\ZZ R \rtimes W$, i.e.~the group of isometries of $E$ generated by translations by the scaled root lattice $l\ZZ R$ and the Weyl group $W$ of $R$. It normally acts on weights via the dot action, which shifts the origin to $-\rho$:
\begin{equation*}
w \cdot \lambda=w(\lambda+\rho)-\rho
\end{equation*}
The weight space $X$ can be divided into simplicial fundamental regions called alcoves. The bottom alcove $C_\mathrm{bot}$ and its closure $\overline{C}_\mathrm{bot}$ are defined to be the sets of integral weights
\begin{align*}
C_\mathrm{bot}& =\{\lambda \in X: 0<\langle \lambda+\rho,\alpha^\vee \rangle<l \text{ for all } \alpha \in R^+\} \\
\overline{C}_\mathrm{bot}& =\{\lambda \in X: 0\leq \langle \lambda+\rho,\alpha^\vee \rangle\leq l \text{ for all } \alpha \in R^+\}
\end{align*}
The other alcoves can be obtained by taking the image of $C_\mathrm{bot}$ under some isometry from the affine Weyl group $\mathcal{W}$. The dominant alcoves are those which intersect the dominant region $X^+$ non-trivially. 

The linkage principle states that if $L_l(\lambda)$ and $L_l(\lambda')$ are in the same block, then $\lambda' \in \mathcal{W} \cdot \lambda$. We write $\mathcal{B}_\lambda$ for the full subcategory of modules whose composition factors all lie in $\mathcal{W} \cdot \lambda$, and $\pr_\lambda:\modcat{U_l} \rightarrow \mathcal{B}_\lambda$ for the projection functor onto this subcategory. For a dominant alcove $C$ and $\lambda,\mu \in \overline{C}$ the translation functor is defined by
\begin{equation*}
T_\lambda^\mu(V)=\pr_\mu\left(\pr_\lambda(V) \otimes L_l(w(\mu-\lambda))\right)
\end{equation*}
where $w \in W$ is chosen so that $w(\mu-\lambda) \in X^+$. Note that $T_\lambda^\mu$ is always exact as the composition of several exact functors. The translation principle states that $T_\lambda^\mu,T_\mu^\lambda:\mathcal{B}_\lambda \leftrightarrows \mathcal{B}_\mu$ are adjoint and mutually inverse if $\lambda$ and $\mu$ belong to the same set of alcoves.

Suppose $\lambda,\lambda' \in X^+$ belong to adjacent alcoves $C,C'$ with $\lambda<\lambda'$. Let $\mu$ be a weight on the wall between them, labelled by $s \in W$. The wall-crossing functor is defined to be $\theta_s=T_\mu^{\lambda'} \circ T_\lambda^\mu$, which is self-adjoint and exact. It is well-known that $\theta_s \weyl(\lambda)\iso \theta_s \weyl(\lambda')$, and we have the exact sequence
\begin{equation}
0 \rightarrow \weyl_l(\lambda') \rightarrow \theta_s \weyl_l(\lambda) \rightarrow \weyl_l(\lambda) \rightarrow 0
\end{equation}

\section{Kazhdan-Lusztig combinatorics}

\subsection*{Notation}

We use notation from \cite{soergel-KL} and \cite[Appendix C.1]{jantzen} for various Kazhdan-Lusztig polynomials, which we will summarize.

The affine Weyl group $\mathcal{W}$ can be viewed as a Coxeter group, with $(n+1)$ generators $\mathcal{S}$ corresponding to reflections in the walls of the bottom alcove $C$. It comes equipped with the Bruhat order and the length function $\ell:\mathcal{W} \rightarrow \ZZ_{\geq 0}$. Let $S$ denote the $n$ generators in $\mathcal{S}$ corresponding to reflections contained in the Weyl group $W$. 

We write $\mathcal{L}=\ZZ[v^{\pm 1}]$. Let $\mathcal{H}=\mathcal{H}(\mathcal{W},\mathcal{S})$ be the Hecke algebra associated to the Coxeter system $(\mathcal{W},\mathcal{S})$, an associative $\mathcal{L}$-algebra with generators $\{H_s\}_{s \in \mathcal{S}}$ and relations
\begin{align}
H_s^2 & =1+(v^{-1}-v)H_s && \text{for all $s \in \mathcal{S}$} \\
\overbrace{H_s H_t H_s \dotsm}^{\text{$r$ terms}} & =\overbrace{H_t H_s H_t \dotsm}^{\text{$r$ terms}}&& \text{for all $s,t \in \mathcal{S}$, where $r$ is the order of $st$}
\end{align}
For any reduced word $x=stu\dotsm \in \mathcal{W}$, the element $H_x=H_s H_t H_u \dotsm$ is well-defined, and the set $\{H_x\}_{x \in \mathcal{W}}$ forms an $\mathcal{L}$-basis for $\mathcal{H}$. Each generator $H_s$ is invertible, with $H_s^{-1}=H_s+v-v^{-1}$, so each basis element $H_x$ is also invertible. Define the ring homomorphism $d:\mathcal{H} \rightarrow \mathcal{H}$ by $d(v)=v^{-1}$ and $d(H_x)=(H_{x^{-1}})^{-1}$ (this extends an obvious homomorphism on $\mathcal{L}$). We call this involution dualization, and we sometimes write $\overline{H}$ for $d(H)$. For $s \in \mathcal{S}$ we define $\underline{H}_s=H_s+v$ and $\tilde{\underline{H}}_s=H_s-v^{-1}$. Notice that $\overline{H}_s=H_s+v-v^{-1}$ so both $\underline{H}_s$ and $\tilde{\underline{H}}_s$ are self-dual, and the sets $\{\underline{H}_s\}_{s \in \mathcal{S}}$ and $\{\tilde{\underline{H}}_s\}_{s \in \mathcal{S}}$ each generate $\mathcal{H}$ as an $\mathcal{L}$-algebra.

Now let $\mathcal{H}_W=\mathcal{H}(W,S) \leq \mathcal{H}$ be the Hecke algebra obtained from the Weyl group. Since $(H_s-v^{-1})(H_s+v)=0$ for each generator $s \in S$, for each $u \in \{-v,v^{-1}\}$ there is a homomorphism of $\mathcal{L}$-algebras $\varphi_u: \mathcal{H}_W \rightarrow \mathcal{L}$, defined by mapping $H_s \mapsto u$. This turns $\mathcal{L}$ into a right $\mathcal{H}_W$-module which we call $\mathcal{L}(u)$. Now define two right $\mathcal{H}$-modules
\begin{align*}
\mathcal{M}& =\mathcal{L}(v^{-1}) \otimes_{\mathcal{H}_W} \mathcal{H} \\
\mathcal{N}& =\mathcal{L}(-v) \otimes_{\mathcal{H}_W} \mathcal{H}
\end{align*}

We can obtain an $\mathcal{L}$-basis for $\mathcal{M}$ via a set of representatives for the right cosets $W \backslash \mathcal{W}$. A natural choice for such representatives comes from the dominant alcoves, namely, the set $\mathcal{W}^+=\{x \in \mathcal{W} : (x \cdot C) \cap X^+ \neq \emptyset\}$, or in other words the affine Weyl group elements which map $C$ to another dominant alcove. Defining $M_x$ to be $1 \otimes H_x$ in $\mathcal{M}$, we get the $\mathcal{L}$-basis $\{M_x\}_{x \in \mathcal{W}^+}$. The entirely analogous idea works for $\mathcal{N}$. The action of $\underline{H}_s$ on these bases is
\begin{align}
M_x\underline{H}_s& =\begin{cases}
M_{xs}+vM_x & \text{if $xs \in \mathcal{W}^+$ and $xs>x$} \\
M_{xs}+v^{-1}M_x & \text{if $xs \in \mathcal{W}^+$ and $xs<x$} \\
(v+v^{-1})M_x & \text{if $xs \notin \mathcal{W}^+$}
\end{cases} \\ 
N_x\underline{H}_s& =\begin{cases}
N_{xs}+vN_x & \text{if $xs \in \mathcal{W}^+$ and $xs>x$} \\
N_{xs}+v^{-1}N_x & \text{if $xs \in \mathcal{W}^+$ and $xs<x$} \\
0 & \text{if $xs \notin \mathcal{W}^+$}
\end{cases}
\end{align}

The dualization map $d:\mathcal{H} \rightarrow \mathcal{H}$ extends to a dualization map of $\mathcal{M}$ by mapping $a \otimes H \mapsto \overline{a} \otimes \overline{H}$. To see this, note that for all $s \in \mathcal{S}$
\begin{equation*}
\phi_u(\underline{H}_s)=\begin{cases}
v+v^{-1} & \text{if $u=v^{-1}$} \\
0 & \text{if $u=-v$}
\end{cases}
\end{equation*}
so $\phi_u(\underline{H}_s)$ is self-dual. This means that for $s \in S$,
\begin{align*}
d(a \otimes (\underline{H}_s H))& =\overline{a} \otimes \overline{\underline{H}_s} \overline{H} \\
& =\overline{a} \otimes \underline{H}_s \overline{H} \\
& =\overline{a}\phi_u(\underline{H}_s) \otimes \overline{H} \\
& =\overline{a\phi_u(\underline{H}_s)} \otimes \overline{H} \\
& =d(a\phi_u(\underline{H}_s) \otimes H)
\end{align*}
As $\{\underline{H}_s\}_{s \in S}$ generates $\mathcal{H}_W$ this shows that the map above is well-defined.

\begin{thm}[{\cite[Theorem 3.1]{soergel-KL}}]
There is a unique set of polynomials $\{m_{y,x}\}_{x,y \in \mathcal{W}^+}$ in $\ZZ[v]$ such that
\begin{enumerate}[label={\upshape (\roman*)}]
\item if $m_{y,x} \neq 0$, then either $y=x$ and $m_{x,x}=1$ or $y<x$ and $m_{y,x} \in v\ZZ[v]$;

\item the element $\underline{M}_x=\sum_{y} m_{y,x} M_y$ is self-dual.
\end{enumerate}
There are also analogous polynomials $\{n_{y,x}\}_{x,y \in \mathcal{W}^+}$ for $\mathcal{N}$.
\end{thm}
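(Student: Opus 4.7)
The plan is to carry out the standard Kazhdan-Lusztig-Deodhar inductive construction of a canonical basis, now in the parabolic module $\mathcal{M}$. For uniqueness: if $P$ and $P'$ both satisfy (i) and (ii), then their difference $D$ is self-dual, has zero coefficient of $M_x$, and all other coefficients in $v\ZZ[v]$. Pick a Bruhat-maximal $y<x$ with nonzero coefficient $c_y$ in $D$ and compare the $M_y$-coefficients of $D$ and $\overline{D}$. Because (by an easy induction on length using the $\underline{H}_s$-action) the bar involution on $\mathcal{M}$ is upper unitriangular on the basis $\{M_z\}$, and $c_z=0$ for $z>y$, this gives $c_y=\overline{c_y}\in v\ZZ[v]\cap v^{-1}\ZZ[v^{-1}]=\{0\}$, a contradiction.

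For existence I would induct on $\ell(x)$, starting from $\underline{M}_e:=M_e$. For $x\neq e$ in $\mathcal{W}^+$, choose a right descent $s\in\mathcal{S}$ with $xs<x$; a short length computation, using the characterisation of $\mathcal{W}^+$ as those elements $x$ with $s'x>x$ for all $s'\in S$, gives $xs\in\mathcal{W}^+$, so $\underline{M}_{xs}$ is available by the inductive hypothesis. The product $\underline{M}_{xs}\underline{H}_s$ is self-dual, using the extension of the bar involution to $\mathcal{M}$ established just before the theorem. Expanding with the explicit formula for $M_y\underline{H}_s$, the leading contribution is $M_{xs}\underline{H}_s=M_x+vM_{xs}$ and the remainder lies in $\sum_{y<x}\mathcal{L} M_y$, so $\underline{M}_{xs}\underline{H}_s=M_x+\sum_{y<x}c_y(v)M_y$ for certain Laurent polynomials $c_y(v)$.

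I would then correct this element into $\underline{M}_x$ by subtracting integer multiples of the previously constructed $\underline{M}_y$, working downward in the Bruhat order on $y$. At each stage the self-duality of the partially corrected element forces the ``bad part'' of $c_y(v)$---namely the sum of its non-positive-degree terms---to be self-dual, and a self-dual element supported in degrees $\leq 0$ must be an integer, so subtracting that integer times $\underline{M}_y$ cancels the bad part while preserving self-duality. The analogous statement for $\mathcal{N}$ is obtained in the same way, using the $N_y\underline{H}_s$-action formula instead. The main technical hurdle is establishing at each step that the residue of $c_y(v)$ modulo $v\ZZ[v]$ really is self-dual: this is the crux of the whole construction and depends on both the Bruhat-triangularity of the bar involution on $\{M_z\}$ and careful bookkeeping of which corrections have already been applied.
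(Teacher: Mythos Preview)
Your proposal is correct and follows the same standard Kazhdan--Lusztig/Deodhar inductive construction as the paper; both multiply an already-constructed canonical element by $\underline{H}_s$ and then correct by previously built $\underline{M}_y$'s to remove unwanted terms. There is one small simplification in the paper worth noting: rather than iteratively peeling off the non-positive part of each coefficient by a self-duality argument, the paper uses the explicit $\underline{H}_s$-action formulas to compute the intermediate coefficients $m^s_{y,x}$ and observe that they already lie in $\ZZ[v]$, so the only correction needed is subtracting the \emph{constants} $m^s_{y,x}(0)\,\underline{M}_y$ all at once; your more general downward-Bruhat correction of course also works but is slightly more laborious here. You also spell out the uniqueness argument and the fact that $xs\in\mathcal{W}^+$ whenever $xs<x$, which the paper leaves implicit.
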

\begin{proof}
Induct on the length of $x$. Suppose for some $x \in \mathcal{W}^+$ we have already defined $\underline{M}_x$ and all $\underline{M}_u$ with $\ell(u)<\ell(x)$. Suppose $s \in \mathcal{S}$ such that $xs \in \mathcal{W}^+$ and $xs>x$. Write 
\begin{equation*}
\underline{M}_x \underline{H}_s=M_{xs}+\sum_{y<x} m^s_{y,x} M_y
\end{equation*}
From the action of $\underline{H}_s$ on the basis above we have (for $x,y \in \mathcal{W}^+$)
\begin{equation}
m^s_{y,x}=\begin{cases}
m_{ys,x}+v^{-1}m_{y,x} & \text{if $ys<y$ and $ys \in \mathcal{W}^+$} \\
m_{ys,x}+vm_{y,x} & \text{if $y<ys$ and $ys \in \mathcal{W}^+$} \\
(v+v^{-1})m_{y,x} & \text{if $ys \notin \mathcal{W}^+$}
\end{cases}
\end{equation}
Clearly $\underline{M}_x \underline{H}_s$ is self-dual, so the element
\begin{equation*}
\underline{M}_{xs}=\underline{M}_x \underline{H}_s-\sum_{y<x} m^s_{y,x}(0) \underline{M}_y=M_{xs}+\sum_{y<x} m_{y,xs} M_y
\end{equation*}
is also self-dual, with the property that the coefficient $m_{y,xs}$ of $M_y$ has zero constant coefficient.
\end{proof}

\begin{rem}
For self-dual elements of the form 
\begin{equation*}
\tilde{\underline{M}}_x=M_x+\sum_{y<x} \tilde{m}_{y,x} M_y
\end{equation*}
where the polynomial coefficients are restricted to being in $v^{-1}\ZZ[v^{-1}]$ instead of $v\ZZ[v]$, Deodhar showed that we must have $\tilde{m}_{y,x}=(-1)^{\ell(x)+\ell(y)} \overline{n_{y,x}}$, and similarly for $\tilde{\underline{N}}_x$ \cite{deodhar}. It is not difficult to show that an inductive construction similar to the above proof uses multiplication by $\tilde{\underline{H}}_s$ instead of $\underline{H}_s$.
\end{rem}

We can now define the inverse polynomials $\{m^{y,x}\}$ for $y,x \in \mathcal{W}^+$ and $y \geq x$ such that the following formula holds:
\begin{equation}
\sum_z (-1)^{\ell(z)+\ell(x)} m^{z,x} m_{z,y}=\delta_{x,y}
\end{equation}
While there is an interpretation of these polynomials as coefficients of some element of $\mathcal{M}^\ast$ with respect to a certain basis, this will not matter for the sequel.

\subsection*{Character formulae}

Let $A$ be some dominant alcove. The structure of the module $\weyl_l(\lambda)$ for any $\lambda$ contained in the interior of $A$ only depends on $A$ and not on the exact weight $\lambda$ by the translation principle. So we may abuse notation and write $\weyl_l(A)$ instead of $\weyl_l(\lambda)$. We can even reconstruct character formulae written in this way using the linkage principle. We will also freely use the bijection between dominant alcoves and elements of $\mathcal{W}^+$ for the indices of the various Kazhdan-Lusztig polynomials $\{m_{y,x}\}$ etc.

With this notation, Lusztig's character formula can be written as follows.

\begin{thm}[Lusztig's character formula]
Let $A$ be an alcove in the dominant region. Then the following character formula holds:
\begin{equation}
[\weyl_l(A)]=\sum_B m^{A,B}(1) [L_l(B)]
\end{equation}
where the sum is over all dominant alcoves $B$.
\end{thm}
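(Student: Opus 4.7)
The plan is to derive this formula by inverting the standard form of Lusztig's conjecture (established for quantum groups at an $l$-th root of unity by the cited work of Kazhdan--Lusztig and Lusztig), using the defining inverse matrix identity for $m^{y,x}$ specialized at $v=1$.

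First, I would state Lusztig's conjecture in the form that gives $[L_l(A)]$ in terms of the $\{[\weyl_l(B)]\}$, with coefficients that are signed evaluations of Kazhdan--Lusztig polynomials at $v=1$. Translating the classical formulation into the Soergel parabolic setup of this section makes use of Deodhar's identity $\tilde{m}_{y,x} = (-1)^{\ell(x)+\ell(y)} \overline{n_{y,x}}$ from the preceding remark, together with the observation that dualization $v \leftrightarrow v^{-1}$ and the overall sign twist are both inert under the specialization $v=1$. The result is a formula of the shape
\begin{equation*}
[L_l(A)] = \sum_B (-1)^{\ell(A)+\ell(B)}\,m_{B,A}(1)\,[\weyl_l(B)],
\end{equation*}
with the indexing dictated by the compatibility of the Bruhat order with composition series (in particular, $m_{B,A} \neq 0$ requires $B \leq A$, matching the expected support of the decomposition numbers of $\weyl_l(A)$).

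Second, I would invert this expression using the relation defining the polynomials $m^{y,x}$. Specializing $\sum_z (-1)^{\ell(z)+\ell(x)} m^{z,x}(1)\,m_{z,y}(1) = \delta_{x,y}$ at $v=1$, substituting the above expression for $[L_l(B)]$ into $\sum_B m^{A,B}(1)\,[L_l(B)]$, and interchanging summations yields a double sum that collapses via the inversion identity (or its transposed form, which follows automatically since both matrices are locally finite in the Bruhat order) to recover $[\weyl_l(A)]$.

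The main obstacle is the bookkeeping needed to reconcile the various conventions for Kazhdan--Lusztig polynomials appearing in the literature: the classical $P_{y,x}$ of Kazhdan--Lusztig, the spherical Soergel polynomials $m_{y,x}$, the antispherical $n_{y,x}$, their twisted counterparts $\tilde{m}_{y,x}$ and $\tilde{n}_{y,x}$, and the inverse polynomials $m^{y,x}$, all with potentially differing sign conventions and index orderings. Verifying that these agree at the specialization $v=1$ and that all signs cancel correctly is where the real content of the proof lies, though once a consistent convention is chosen the remaining manipulation is essentially formal.
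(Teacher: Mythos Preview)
The paper does not supply a proof of this theorem at all: it is stated as a known result, with the citations \cite{KL1-2,KL3,KL4,lusztig-nonsym} given earlier in the introduction as justification. Your proposal is therefore not competing with any argument in the paper; rather, you are filling in the passage from the ``standard'' form of Lusztig's conjecture (expressing $[L_l(A)]$ in terms of Weyl characters) to the inverted form the paper records (expressing $[\weyl_l(A)]$ in terms of simple characters) via the defining identity $\sum_z (-1)^{\ell(z)+\ell(x)} m^{z,x} m_{z,y}=\delta_{x,y}$. That is a correct and standard reduction, and your caveat about needing the transposed inversion identity (left inverse versus right inverse for the upper-triangular matrix $(m_{y,x})$) is the right thing to flag. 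The only point worth tightening is the first step: you invoke Deodhar's relation $\tilde m_{y,x}=(-1)^{\ell(x)+\ell(y)}\overline{n_{y,x}}$ to translate the classical statement, but the form you actually need, $[L_l(A)]=\sum_B (-1)^{\ell(A)+\ell(B)} m_{B,A}(1)[\weyl_l(B)]$, is already the direct specialization of the $\mathcal{M}$-module identity $M_A=\sum_B (-1)^{\ell(A)+\ell(B)}\overline{m_{B,A}}\,\tilde{\underline{M}}_B$ (the inversion dual to the remark you cite), so the detour through $\mathcal{N}$ and $n_{y,x}$ is unnecessary.
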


It is equivalent to the following corollary, which is sometimes called the Vogan conjecture.

\begin{cor}
Let $A$ be an alcove in the dominant region, and $s \in \mathcal{S}$ a simple reflection such that $s \cdot A>A$. Then $\theta_s(L_l(A))$ has socle and head isomorphic to $L_l(A)$, and the module
\begin{equation*}
\beta_s(L_l(A))=\rad \theta_s(L_l(A))/\soc \theta_s(L_l(A))
\end{equation*}
is semisimple.
\end{cor}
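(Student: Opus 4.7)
The plan is to use the standard dictionary between wall-crossing functors and the right action of the Hecke algebra on the antispherical module $\mathcal{N}$, then transport structural information from the combinatorics to the module category via Lusztig's character formula (LCF). Specifically, LCF is equivalent (via the inverse polynomials $m^{y,x}$) to expressing each $[L_l(A)]$ as a signed sum of $[\weyl_l(B)]$ classes, so after specialization $v = 1$ the class $[L_l(A)]$ corresponds to the self-dual basis element $\underline{N}_A$, and applying $\theta_s$ corresponds to right multiplication by $\underline{H}_s$.

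First I would compute $[\theta_s L_l(A)]$. Using the formulas for the action of $\underline{H}_s$ on the basis $\{N_y\}$ and an inductive unpacking of $\underline{N}_A = \sum_B n_{B,A} N_B$, one obtains
\begin{equation*}
[\theta_s L_l(A)] = 2[L_l(A)] + \sum_{B} \mu(B, A)\,[L_l(B)],
\end{equation*}
where $B$ runs over dominant alcoves with $B < A$ and $s \cdot B < B$, and $\mu(B, A)$ denotes the constant term of the relevant Kazhdan--Lusztig polynomial. In particular, $L_l(A)$ appears exactly twice as a composition factor, and every other composition factor $L_l(B)$ satisfies $B < A$ and $s \cdot B < B$.

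Next I would identify the socle and head. Non-vanishing of $\theta_s L_l(A)$ follows from $s \cdot A > A$ and the definition of $\theta_s$ via translation. Self-adjointness of $\theta_s$ together with the adjunction unit and counit gives canonical nonzero maps $L_l(A) \to \theta_s L_l(A) \to L_l(A)$, so $L_l(A)$ lies in both $\soc \theta_s L_l(A)$ and $\head \theta_s L_l(A)$. Combined with the multiplicity-$2$ count, this forces $\soc \theta_s L_l(A) \iso \head \theta_s L_l(A) \iso L_l(A)$ with all remaining composition factors appearing only in the middle layer $\beta_s L_l(A)$.

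The main obstacle is semisimplicity of $\beta_s L_l(A)$. My approach is to combine the self-duality of $\theta_s L_l(A)$ (inherited from $L_l(A)$ via the fact that $\theta_s$ commutes with contravariant duality) with a parity-vanishing input: the condition $\mu(B, A) \neq 0$ forces $\ell(A) - \ell(B)$ to be odd, so every composition factor of $\beta_s L_l(A)$ has length with the same parity as $\ell(A) - 1$, and the Kazhdan--Lusztig/Beilinson--Bernstein--Deligne parity-vanishing theorem underlying LCF yields $\Ext^1(L_l(B), L_l(B')) = 0$ whenever $\ell(B) \equiv \ell(B') \pmod{2}$. Any nonsplit extension between composition factors of $\beta_s L_l(A)$ would violate this, forcing $\beta_s L_l(A)$ to be semisimple. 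This parity step is what I expect to be most delicate, since it imports structural content from the geometric/categorified side of LCF rather than from the character identity alone.
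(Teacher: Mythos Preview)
The paper does not give a proof of this corollary at all: it simply records that the statement (the ``Vogan conjecture'') is \emph{equivalent} to Lusztig's character formula and moves on. So there is no argument in the paper to compare yours against; your proposal is an attempt to supply the implication LCF $\Rightarrow$ Vogan, which the paper treats as a black box.

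Your steps 1 and 2 are fine modulo a notational slip: in the paper's dictionary the class $[L_l(A)]$ corresponds to $\tilde{\underline{N}}_A$, not to $\underline{N}_A$ (the latter is $[T_l(A)]$). The character computation for $[\theta_s L_l(A)]$ is standard and your identification of the head and socle via adjunction is correct.

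The real issue is step 3. You deduce semisimplicity of $\beta_s L_l(A)$ from the claim that $\Ext^1(L_l(B),L_l(B'))=0$ whenever $\ell(B)\equiv\ell(B')\pmod 2$, and you attribute this to ``the parity-vanishing theorem underlying LCF''. But Lusztig's character formula is a statement about composition multiplicities; it says nothing about $\Ext$ groups. The $\Ext^1$ parity vanishing you invoke is a genuinely stronger structural fact (essentially a shadow of Koszulity of the principal block), and in the quantum setting it is established by separate and substantially heavier arguments than LCF itself. Importing it here either assumes more than the paper does or comes close to assuming what you are trying to prove, since parity $\Ext^1$-vanishing and the Vogan conjecture sit at the same depth. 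The honest implication LCF $\Rightarrow$ Vogan is proved by a different route: one inducts on the Bruhat order, applies $\theta_s$ to a Weyl (or radical) filtration already controlled by LCF, and compares characters layer by layer to pin down the Loewy structure of $\theta_s L_l(A)$; see for instance Andersen's original arguments. If you want a self-contained proof, that is the line to pursue rather than the $\Ext$ parity shortcut.
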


If the corollary holds one can show that $[\beta_s(L_l(A)):L_l(B)]=m^s_{A,B}(1)$.

For tilting modules, Soergel stated proved the following character formula \cite{soergel-KL,soergel-kacmoody}.

\begin{thm}[Soergel's tilting character formula]
Let $A$ be an alcove in the dominant region. Then the following character formula holds:
\begin{equation}
[T_l(A)]=\sum_B n_{B,A}(1) [\weyl_l(B)]
\end{equation}
where the sum is over all dominant alcoves $B$.
\end{thm}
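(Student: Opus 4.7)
The plan is to identify the Grothendieck group of the principal block with $\mathcal{N}|_{v=1}$ (meaning the specialization $\mathcal{N}/(v-1)\mathcal{N}$) in such a way that Weyl module classes map to the basis elements $N_x|_{v=1}$ and translation functors act as right multiplication by $\underline{H}_s|_{v=1} = H_s + 1$. The theorem then reduces to matching the tilting classes with the self-dual basis $\underline{N}_x|_{v=1}$.

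Let $\mathcal{B}_0$ denote the block of $L_l(C)$, and define $\eta : K_0(\mathcal{B}_0) \to \mathcal{N}|_{v=1}$ by $\eta([\weyl_l(x \cdot C)]) = N_x|_{v=1}$ for $x \in \mathcal{W}^+$; this is a $\ZZ$-linear isomorphism since Weyl classes form a basis of $K_0(\mathcal{B}_0)$. The first task is to verify $\eta([\theta_s V]) = \eta([V]) \cdot \underline{H}_s|_{v=1}$. When $xs \in \mathcal{W}^+$, the short exact sequence at the end of Section 1 (applied whether $xs > x$ or $xs < x$, using $\theta_s \weyl_l(x \cdot C) \iso \theta_s \weyl_l(xs \cdot C)$) gives
\begin{equation*}
[\theta_s \weyl_l(x \cdot C)] = [\weyl_l(x \cdot C)] + [\weyl_l(xs \cdot C)],
\end{equation*}
matching $N_x \underline{H}_s|_{v=1} = N_{xs}|_{v=1} + N_x|_{v=1}$. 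When $xs \notin \mathcal{W}^+$, the alcove $x \cdot C$ lies on a boundary wall of the dominant region in direction $s$, and a standard translation argument gives $\theta_s \weyl_l(x \cdot C) = 0$, matching $N_x\underline{H}_s|_{v=1} = 0$.

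The main induction on $\ell(x)$ now shows $\eta([T_l(x \cdot C)]) = \underline{N}_x|_{v=1}$. The base case $x = e$ follows from $T_l(C) = \weyl_l(C)$ and $\underline{N}_e = N_e$. For the inductive step pick $s$ with $xs < x$ and $xs \in \mathcal{W}^+$. Since translation preserves tilting modules, $\theta_s T_l(xs \cdot C)$ is tilting of highest weight $x \cdot C$, so Krull--Schmidt provides unique $a_y \in \ZZ_{\geq 0}$ with
\begin{equation*}
\theta_s T_l(xs \cdot C) \iso T_l(x \cdot C) \oplus \bigoplus_{y < x} T_l(y \cdot C)^{\oplus a_y}.
\end{equation*}
Applying $\eta$, invoking the induction hypothesis, and using the recursion $\underline{N}_{x} = \underline{N}_{xs}\underline{H}_s - \sum_{y < xs} n^s_{y,xs}(0) \underline{N}_y$ (the $\mathcal{N}$-analogue of the construction in the theorem above) give
\begin{equation*}
\eta([T_l(x \cdot C)]) = \underline{N}_x|_{v=1} + \sum_{y < x}\left(n^s_{y,xs}(0) - a_y\right) \underline{N}_y|_{v=1},
\end{equation*}
reducing the whole problem to the identity $a_y = n^s_{y,xs}(0)$.

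The main obstacle is this multiplicity matching, which is not determined by character data alone. The intended resolution is to refine $\eta$ to a graded setting: a graded lift of $\mathcal{B}_0$ makes $\eta$ into an honest $\mathcal{H}$-module isomorphism from a graded Grothendieck group onto $\mathcal{N}$ itself, one that intertwines graded translation with right multiplication by $\underline{H}_s$ and carries the graded tilting duality to the involution $d$. Graded-lifted tilting classes are then self-dual elements of $\mathcal{N}$ with leading coefficient $1$ at $N_x$, and a degree-positivity argument bounds the remaining coefficients into $v\ZZ[v]$. The uniqueness characterizing $\underline{N}_x$ forces equality, and specialization at $v=1$ yields the theorem. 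The positivity/Koszul-type step is the genuine bottleneck and is the heart of Soergel's proof.
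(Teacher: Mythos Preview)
The paper does not supply a proof of this theorem; it is quoted as a result of Soergel \cite{soergel-KL,soergel-kacmoody} and used as input for the rest of the paper. Your outline is a reasonable summary of the architecture of Soergel's argument: identify the Grothendieck group of the principal block with $\mathcal{N}$ specialized at $v=1$, check that wall-crossing matches right multiplication by $\underline{H}_s$, and then attempt to match $[T_l(x\cdot C)]$ with $\underline{N}_x|_{v=1}$ by induction on length. You correctly isolate the genuine obstacle --- the multiplicity identity $a_y=n^s_{y,xs}(0)$ is not a formal consequence of anything available at the level of ungraded characters, and needs either a graded lift together with the relevant positivity, or some equivalent representation-theoretic input. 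As you yourself say, this step is ``the heart of Soergel's proof'' and you do not carry it out; so your proposal is not a self-contained proof but an annotated pointer to one, which is exactly how the paper treats the result as well.
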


The following combinatorial property of the Kazhdan-Lusztig polynomials is the basis for the balancing property of filtrations in later sections.
\begin{lem} \label{lem:kl-balanced}
The polynomial
\begin{equation}
t_{B,A}=\sum_C n_{C,A}\overline{m^{C,B}} \label{eq:KL-tilt-balance}
\end{equation}
is self-dual.
\end{lem}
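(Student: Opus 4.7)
The plan is to produce $\overline{t_{B,A}}$ essentially as a change-of-basis coefficient between two self-dual bases of $\mathcal{M}$, from which self-duality is automatic. Specifically, I will compare Soergel's basis $\{\underline{M}_x\}_{x\in\mathcal{W}^+}$ with the ``inverse'' self-dual basis $\{\tilde{\underline{M}}_x\}_{x\in\mathcal{W}^+}$ described in the Remark, whose existence comes from the parallel inductive construction using $\tilde{\underline{H}}_s$ in place of $\underline{H}_s$. Writing $\tilde{\underline{M}}_A=\sum_B c_{B,A}\,\underline{M}_B$, the key structural input is that since both $\tilde{\underline{M}}_A$ and every $\underline{M}_B$ are fixed by $d$, each coefficient $c_{B,A}$ is self-dual.

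To identify $c_{B,A}$ with $\overline{t_{B,A}}$ up to a sign, I first expand $\tilde{\underline{M}}_A=\sum_C \tilde m_{C,A}\,M_C$ and substitute Deodhar's formula $\tilde m_{C,A}=(-1)^{\ell(A)+\ell(C)}\overline{n_{C,A}}$ cited in the Remark. Next I rewrite each $M_C$ in the $\{\underline{M}_B\}$-basis: the defining identity $\sum_z (-1)^{\ell(z)+\ell(x)}m^{z,x}m_{z,y}=\delta_{x,y}$ for the inverse Kazhdan-Lusztig polynomials is precisely the statement that inverting the triangular matrix $(m_{y,x})$ gives
\begin{equation*}
M_C=\sum_B (-1)^{\ell(B)+\ell(C)}\,m^{C,B}\,\underline{M}_B.
\end{equation*}
Substituting the two expansions into one another, the factors $(-1)^{2\ell(C)}$ coming from the summed index $C$ cancel, leaving
\begin{equation*}
c_{B,A}=(-1)^{\ell(A)+\ell(B)}\sum_C \overline{n_{C,A}}\,m^{C,B}=(-1)^{\ell(A)+\ell(B)}\,\overline{t_{B,A}}.
\end{equation*}

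Since $(-1)^{\ell(A)+\ell(B)}$ is invariant under bar, applying $d$ to this identity shows that self-duality of $c_{B,A}$ forces self-duality of $t_{B,A}$, which is what we want. The work is essentially bookkeeping; the one genuinely non-trivial input is Deodhar's formula, which supplies the needed bridge between the $\mathcal{M}$- and $\mathcal{N}$-sides of the theory. Keeping the signs straight is the only place where one could easily go astray, but once the expansion of $M_C$ via the inverse polynomials is pinned down, everything else is formal.
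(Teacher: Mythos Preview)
Your proof is correct and follows essentially the same strategy as the paper: both recognize $t_{B,A}$ (up to a sign and a bar) as a change-of-basis coefficient between two self-dual bases, whence self-duality is automatic. The paper carries this out in $\mathcal{N}$ rather than $\mathcal{M}$, computing directly that $\underline{N}_A=\sum_B t_{B,A}\,\tilde{\underline{N}}_B$; this concrete identity is then reused in the discussion of $\theta_s([T_l(A)])$ immediately afterward, which is the only practical difference.
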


\begin{proof}
Let $T_A=\sum_B \overline{m^{A,B}} \tilde{\underline{N}}_B$. Unlike $\tilde{\underline{N}}_A$, $T_A$ is not self-dual. Now define $\underline{T}_A$ as follows:
\begin{equation*}
\underline{T}_A=\sum_C n_{C,A} T_K
\end{equation*}
We claim that this sum is self-dual. Since the coefficient of $\tilde{\underline{N}}_B$ in $\underline{T}_A$ is $t_{B,A}$ and $\tilde{\underline{N}}_B$ is self-dual, this shows what we want.
\begin{align*}
\underline{T}_A& =\sum_{B,C} n_{C,A} \overline{m^{C,B}} \tilde{\underline{N}}_B \\
& =\sum_{B,C,D} (-1)^{\ell(B)+\ell(D)} n_{C,A} \overline{m^{C,B}} \overline{m_{D,B}} N_D \\
& =\sum_{C,D} (-1)^{\ell(C)+\ell(D)} n_{C,A} \delta_{C,D} N_D \\
& =\sum_C n_{C,A} N_C \\
& =\underline{N}_A
\end{align*}
\end{proof}

There is an abelian group isomorphism from the Grothendieck group to $\mathcal{N}$ mapping $\underline{N}_A \mapsto [T_l(A)]$. It maps $N_A \mapsto [\weyl_l(A)]$ and $\tilde{\underline{N}}_A \mapsto [L_l(A)]$. Since the action of $\underline{H}_s$ on the basis $\{N_A\}$ matches the action of the wall-crossing functor $\theta_s$ on the Weyl modules (on the level of characters), we have that this holds for any $\weyl$-filtered module, so we can evaluate the character $\theta_s([T_l(A)])$ in this way:
\begin{align*}
\theta_s([T_l(A)])& =[T_l(A)]\underline{H}_s \\
& =\sum_{B,C} n_{C,A} \overline{m^{C,B}} \tilde{\underline{N}}_B\underline{H}_s \\
& =\sum_{B,C} n_{C,A} \overline{m^{C,B}} \tilde{\underline{N}}_B(\tilde{\underline{H}}_s+v+v^{-1}) \\
& =\sum_{B,C} n_{C,A} \overline{m^{C,B}} \tilde{\underline{N}}_B(\tilde{\underline{H}}_s+v+v^{-1}) \\
& =\sum_{B,C} n_{C,A} \overline{m^{C,B}} \left((v+v^{-1})\tilde{\underline{N}}_B+\tilde{\underline{N}}_{s \cdot B}+\sum_D \overline{m^s_{D,B}}\tilde{\underline{N}}_D\right)
\end{align*}
This shows that the polynomial version of $\theta_s([T_l(A)])$ respects the filtration described by Vogan's conjecture, which we will use later.

\section{Balanced semisimple filtrations}

\subsection*{Isotropic filtrations}

Let $V$ be a self-dual $U_l$-module. Fix an isomorphism $\phi:V \rightarrow {}^\tau V$. This isomorphism is equivalent to a non-degenerate bilinear form $(-,-)$ on $V$, with the property that $(xv,v')=(v,\tau(x)v')$ for all $x \in U_l$ and $v,v' \in V$. Such forms are called contravariant \cite[Section II.8.17]{jantzen}. By considering the action of such a form on weight spaces it can be shown that all such forms are symmetric. For a subspace $U$ of $V$, recall that the orthogonal subspace is defined to be $U^\perp=\{v \in V : (u,v)=0 \text{ for all } u \in U \}$.

\begin{defn} 
Suppose $U$ is a submodule of $V$. The submodule $U$ is called totally isotropic if $U \leq U^\perp$. Dually $U$ is called totally coisotropic if we have $U^\perp \leq U$.
\end{defn}

It is immediately clear that $U$ is totally isotropic if and only if $U^\perp$ is totally coisotropic.

\begin{lem}
If $U$ is a totally isotropic submodule of $V$, then $T_\lambda^\mu(U)$ is a totally isotropic submodule of $T_\lambda^\mu(V)$.
\end{lem}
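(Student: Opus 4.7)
The plan is to reformulate total isotropy as the vanishing of a single composition of module homomorphisms, and then transport that vanishing through the exact functor $T_\lambda^\mu$.

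The contravariant form on $V$ is packaged as an isomorphism $\phi \colon V \xrightarrow{\sim} {}^\tau V$. Writing $\iota \colon U \hookrightarrow V$ for the inclusion and $\iota^\tau \colon {}^\tau V \twoheadrightarrow {}^\tau U$ for its image under the duality functor, one has $U^\perp = \ker(\iota^\tau \circ \phi)$. Hence the condition $U \leq U^\perp$ is equivalent to the identity of module maps
\begin{equation*}
\iota^\tau \circ \phi \circ \iota = 0 \colon U \longrightarrow {}^\tau U.
\end{equation*}

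Next I invoke the standard fact (see \cite[Chapter II.8]{jantzen}) that translation functors commute with duality: since the simple module $L_l(w(\mu-\lambda))$ appearing in the definition of $T_\lambda^\mu$ is self-dual, and both tensoring with a self-dual module and projecting onto a block commute with $\tau$, there is a natural isomorphism $\eta \colon T_\lambda^\mu \circ \tau \Rightarrow \tau \circ T_\lambda^\mu$. The composition $\eta_V \circ T_\lambda^\mu(\phi) \colon T_\lambda^\mu(V) \to {}^\tau T_\lambda^\mu(V)$ is then the module isomorphism corresponding to the induced contravariant form on $T_\lambda^\mu(V)$, and this is the form with respect to which we will verify isotropy.

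Now apply $T_\lambda^\mu$ to the vanishing identity above, using exactness to identify $T_\lambda^\mu(\iota)$ with the inclusion $T_\lambda^\mu(U) \hookrightarrow T_\lambda^\mu(V)$ and $T_\lambda^\mu(\iota^\tau)$ with the surjection $T_\lambda^\mu({}^\tau V) \twoheadrightarrow T_\lambda^\mu({}^\tau U)$. Postcomposing with the isomorphism $\eta_U$ and using naturality of $\eta$ with respect to $\iota$ (so that $\eta_U \circ T_\lambda^\mu(\iota^\tau) = {}^\tau T_\lambda^\mu(\iota) \circ \eta_V$), one rewrites $T_\lambda^\mu(\iota^\tau \circ \phi \circ \iota) = 0$ as
\begin{equation*}
{}^\tau T_\lambda^\mu(\iota) \circ \bigl(\eta_V \circ T_\lambda^\mu(\phi)\bigr) \circ T_\lambda^\mu(\iota) = 0.
\end{equation*}
This says exactly that $T_\lambda^\mu(U)$ is orthogonal to itself in $T_\lambda^\mu(V)$ under the induced form, i.e.~totally isotropic.

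The principal technical hurdle is not the diagram chase but the setup of $\eta$: one needs to know both that $T_\lambda^\mu \circ \tau \cong \tau \circ T_\lambda^\mu$ and that the isomorphism is natural in $V$, in order to justify the naturality square used above. Once this (standard) piece of machinery is invoked, the remainder of the argument is purely formal and uses no special information about $V$, $U$, or the form beyond their existence.
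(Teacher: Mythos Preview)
Your proof is correct and follows essentially the same approach as the paper: both express total isotropy as a diagrammatic condition involving the inclusion $\iota$, the duality isomorphism $\phi$, and the dual map ${}^\tau\iota$, then push this condition through the exact functor $T_\lambda^\mu$ using that translation commutes with $\tau$. The only difference is packaging: the paper phrases the condition as the commutativity of a square with $U^\perp$ appearing explicitly (and then identifies $T_\lambda^\mu(U^\perp)$ with $(T_\lambda^\mu U)^\perp$ via $U^\perp \cong {}^\tau(V/U)$), whereas you phrase it as the vanishing of the single composite $\iota^\tau \circ \phi \circ \iota$ and are more explicit about the naturality of the isomorphism $T_\lambda^\mu \circ \tau \cong \tau \circ T_\lambda^\mu$ that the paper uses tacitly.
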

\begin{proof}
Apply $T_\lambda^\mu$ to the following commutative square
\begin{equation*}
\xymatrix{
U \ar[r] \ar[d] & V \ar[d] \\
U^\perp \ar[r] & {}^\tau V
}
\end{equation*}
Since $U^\perp \iso {}^\tau(V/U)$, we have $T_\lambda^\mu(U^\perp) \iso (T_\lambda^\mu U)^\perp$. This means that $T_\lambda^\mu(U)$ maps into $(T_\lambda^\mu U)^\perp$, which implies that $T_\lambda^\mu(U)$ is a totally isotropic submodule of $T_\lambda^\mu(V)$.
\end{proof}

\begin{defn}
A filtration $\{V_i\}$ of $V$ is called isotropic if it can be written in the form
\begin{equation*}
0=V_{m}^\perp \leq \dotsb \leq V_1^\perp \leq \dotsb \leq V_1 \leq \dotsb \leq V_m=V
\end{equation*}
for some $m \geq 0$. In this situation we typically reindex so that $V_{-i}=V_i^\perp$ for $i>0$. We call $V_{-1}$ and $V_1$ the lower half and upper half of $\{V_i\}$ respectively, denoted $\low \{V_i\}$ and $\up \{V_i\}$. We call $\{V_i\}$ maximal isotropic if $\low \{V_i\}$ is maximal, i.e.~if there is no other isotropic filtration $\{V'_{i'}\}$ such that $\low \{V'_{i'}\} \geq \low \{V_i\}$. The subquotient $\up \{V_i\}/\low \{V_i\}$ is called the middle and is denoted $\midd \{V_i\}$.
\end{defn}

We denote the layers of an isotropic filtration by
\begin{equation*}
V^i=\begin{cases}
V_{i+1}/V_i & \text{if $i>0$} \\
V_i/V_{i-1} & \text{if $i<0$} \\
V_1/V_{-1} & \text{if $i=0$}
\end{cases}
\end{equation*}

If $\{V_i\}$ is a maximal isotropic filtration, then $\midd \{V_i\}$ must be semisimple. To see this, suppose otherwise. We have $(\soc \midd \{V_i\})^\perp=\rad \midd \{V_i\}$. For any non-semisimple summand $U$ of $\midd\{V_i\}$ we have $\rad U \geq \soc U$. From this summand we could construct a larger isotropic filtration, which is a contradiction. 

From \cite{andersen-kaneda} and \cite{ak-loewy}, it follows that the dual Weyl modules have parity filtrations determined by the Kazhdan-Lusztig polynomials $m^{B,A}$. Specifically, there exists a filtration $\dweyl_l(A)^i$ of $\dweyl_l(A)$ such that the successive subquotients $\dweyl_l(A)^i=\dweyl_l(A)_{i+1}/\dweyl_l(A)_i$ are all semisimple, with character
\begin{equation*}
[\dweyl_l(A)^i]=\sum_B (m^{A,B})_i [L_l(B)]
\end{equation*}
where $(m^{A,B})_i$ denotes the coefficient of $v^i$ of the polynomial $m^{A,B}$.

\begin{defn}
Suppose $T$ is a tilting module. A semisimple isotropic filtration $\{T_i\}$ of $T$ is called a balanced semisimple filtration if there is a $\weyl$-filtration
\begin{equation*}
0 \leq T_{(\lambda_1)} \leq T_{(\lambda_2)} \leq \dotsb
\end{equation*}
with weights $\lambda_1 \geq \lambda_2 \geq \dotsb$ and $T_{(\lambda_k)}/T_{(\lambda_{k-1})} \iso \weyl(\lambda_k)$ such that the induced filtration on the subquotients is a shifted version of the parity filtration above. Recall that the $i$th part of the induced filtration on the subquotient $T_{(\lambda_k)}/T_{(\lambda_{k-1})}$ is $(T_{(\lambda_k)} \cap T_i +T_{(\lambda_{k-1})})/T_{(\lambda_{k-1})}$.
\end{defn}

\subsection*{Proof of the main theorem}

Suppose we have a composition factor $U/V$ and another composition factor $V/W$. We say that $U/V$ is above $V/W$ if the extension $U/W$ doesn't split. Otherwise there is a submodule $M \leq U$ with $M+V=U$ and $M \cap V=W$. Then we have $U/V=(M+V)/V \iso M/W$ and also $M \leq U$, so $U/M=(M+V)/M \iso V/W$, and we can switch the order of the composition factors.

\begin{thm}
Let $T=T_l(A)$. There exists a balanced semisimple filtration $\{T_i\}$ of $T$ such that
\begin{equation*}
[T^i:L(B)]=(t_{B,A})_i
\end{equation*}
where $(t_{B,A})_i$ denotes the coefficient of $v^i$ in $t_{B,A}$.
\end{thm}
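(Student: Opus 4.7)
\emph{Proof plan.} I would proceed by strong induction on the length $\ell(A)$ of $A$ as an element of $\mathcal{W}^+$, with base case $A = C_\mathrm{bot}$, where $T_l(A) = L_l(A)$ is simple, $t_{A,A} = 1$, and the trivial one-layer filtration is balanced semisimple. For the inductive step, pick $s \in \mathcal{S}$ with $A' := s \cdot A < A$. Then $T_l(A)$ appears as a direct summand of $\theta_s T_l(A')$, the remaining summands being tilting modules $T_l(D)$ of strictly smaller length. The strategy is to push the balanced semisimple filtration of $T_l(A')$ (existing by induction) through $\theta_s$ and extract the $T_l(A)$ component.

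\textbf{Construction.} The exact functor $\theta_s$ preserves isotropic filtrations by the lemma earlier in the section (applied to each of the two translation functors composing $\theta_s$), so $\{\theta_s T_l(A')_i\}$ is an isotropic filtration on $\theta_s T_l(A')$. Its layers $\theta_s(T_l(A')^i)$ are $\theta_s$ applied to semisimples and are generally not semisimple; however, the Vogan corollary says each $\theta_s L_l(B)$ is a three-step module with $L_l(B)$ as both socle and head and $\beta_s L_l(B)$ as semisimple middle. Refining each layer along these Vogan filtrations (the socle half on one side of the middle, the head half symmetrically placed on the other) gives a semisimple isotropic filtration of $\theta_s T_l(A')$. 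Using the direct-sum decomposition $\theta_s T_l(A') = T_l(A) \oplus \bigoplus_D T_l(D)$, combined with the balanced semisimple filtrations already known on the $T_l(D)$ by strong induction, isolates a semisimple isotropic filtration $\{T_i\}$ on $T_l(A)$. A compatible $\weyl$-filtration is obtained by applying $\theta_s$ to the $\weyl$-filtration of $T_l(A')$, using the exact sequences $0 \to \weyl_l(sC) \to \theta_s \weyl_l(C) \to \weyl_l(C) \to 0$ (for $sC > C$) to refine each $\weyl$-subquotient, and then restricting to the $T_l(A)$ summand.

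\textbf{Character identification.} The computation at the end of Section 2 gives the explicit polynomial expansion
\begin{equation*}
\theta_s [T_l(A')] = \sum_{B,C} n_{C,A'} \overline{m^{C,B}} \Bigl( (v+v^{-1}) \tilde{\underline{N}}_B + \tilde{\underline{N}}_{s \cdot B} + \sum_D \overline{m^s_{D,B}} \tilde{\underline{N}}_D \Bigr),
\end{equation*}
whose coefficients read off the layer multiplicities of the refined filtration above. Subtracting the contributions of the lower-length tilting summands $T_l(D)$ (whose layer multiplicities are $(t_{B,D})_i$ by the inductive hypothesis) and reidentifying $\underline{N}_A = \sum_B t_{B,A} \tilde{\underline{N}}_B$ via Lemma \ref{lem:kl-balanced} gives $[T^i : L_l(B)] = (t_{B,A})_i$, with the balance about $i=0$ automatic from the self-duality of $t_{B,A}$.

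\textbf{Main obstacle.} The delicate step is the refinement of $\{\theta_s T_l(A')_i\}$ to a semisimple isotropic filtration that splits cleanly along the tilting decomposition. Because $\theta_s$ destroys semisimplicity, the Vogan middles $\beta_s L_l(B)$ from different constituents get interleaved, and composition factors from different tilting summands can initially occupy the wrong relative positions. The extension-splitting manoeuvre in the paragraph immediately preceding the theorem — swapping two composition factors whenever their extension splits — is the essential tool here: every cross-summand mismatch can be cleared by such a swap because the direct-sum decomposition forces the relevant $\Ext^1$ to vanish. Arranging these swaps in an order that simultaneously preserves isotropicity and compatibility with the $\weyl$-filtration is where the main technical work lies.
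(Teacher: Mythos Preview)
Your framework is the same as the paper's --- induction on $\ell(A)$, wall-crossing from $T' = T_l(s\cdot A)$, preservation of isotropic filtrations under $\theta_s$, Vogan refinement of the layers, and the character bookkeeping via $\underline{N}_A = \sum_B t_{B,A}\tilde{\underline{N}}_B$ --- so the architecture is right. But you have misdiagnosed where the real work is, and the step you label ``Main obstacle'' does not address it.

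Splitting off the lower tilting summands is \emph{not} the hard part. Since $T_l(A)$ is a direct summand of $\theta_s T'$, intersecting the isotropic filtration $\{\theta_s T'_i\}$ with $T_l(A)$ immediately gives an isotropic filtration $\{T_i\}$ of $T_l(A)$; no swaps are needed and no $\Ext^1$ argument enters. (This is what the paper does: ``pick complements $T_i$ of $Q\cap\theta_s(T'_i)$ in $T$''.) Your claim that ``every cross-summand mismatch can be cleared by such a swap because the direct-sum decomposition forces the relevant $\Ext^1$ to vanish'' is beside the point: the extensions that matter are between composition factors \emph{inside} $T_l(A)$, and those certainly need not vanish.

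The genuine obstacle is the one you glide over in the sentence ``Refining each layer along these Vogan filtrations \dots\ gives a semisimple isotropic filtration.'' Each layer $T^i$ is a summand of $\theta_s$ of a semisimple, so has Loewy length $\leq 3$; refining it into $\soc$/$\beta_s$/$\head$ is fine on a single layer. The problem is that the head of the refinement of $T^i$ and the socle of the refinement of $T^{i+1}$ must be merged (otherwise you get $3(2m+1)$ layers rather than $2m+3$), and there is no a priori reason their extension inside $T$ splits. The paper handles this with a Loewy-length argument: $T'_{i+1}$ has Loewy length $\leq 2$, so $\theta_s(T'_{i+1})$ has Loewy length $\leq 4$; if the relevant extension did not split, the Loewy length of the corresponding piece of $T$ would be $\geq 6$, a contradiction. \emph{That} is what licenses the swap. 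After swapping, the paper uses the Kazhdan--Lusztig parity coming from the $\overline{m^s_{D,B}}$ term in the displayed formula to see that the merged pieces are actually semisimple, then iterates up the lower half and dualises for the upper half.

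So: keep your outline, but replace the ``cross-summand $\Ext^1$'' justification with the Loewy-length bound on $\theta_s$ of short truncations of $T'$, and make explicit that the swap is used to interleave the Vogan refinements of \emph{adjacent} layers of $\{T_i\}$, not to separate tilting summands.
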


\begin{proof}
Induct on the alcove $A$. The base case is when $A$ is the bottom alcove and we have $T_l(A) \iso L_l(A)$. So there is an isotropic filtration $0=T_0 \leq T_0^\perp=T_l(A)$, which is what we want.

For the inductive step, suppose we have shown that the claim holds for all alcoves below $A$. Pick a simple reflection $s \in \mathcal{S}$ such that $s \cdot A<A$ in the dominance ordering. Then $\theta_s(T_l(s \cdot A)) \iso T_l(A) \oplus Q$ where $Q$ is a tilting module of weight lower than $A$. We will denote $T_l(s\cdot A)$ by $T'$ for simplicity.

By induction there is a balanced semisimple filtration $\{T'_i\}$ of $T'$ satisfying the claim. By a previous lemma, $\{\theta_s(T'_i)\}$ is an isotropic filtration of $T \oplus Q$. Pick complements $T_i$ of $ \cap \theta_s(T'_i)$ in $T$ such that $T_i^\perp=T_{-i}$ for all $i$. This is possible because no summand of $Q$ is isomorphic to $T$. This gives an isotropic filtration of $T$.

Suppose the bottom layer of $T'$ is $T'_{-m}=0$ for some $m\geq 0$. Consider the submodules $0=T_{-m} \leq T_{-m+1} \leq T_{-m+2}$. These describe a filtration for a summand of the module $\theta_s(T'_{-m+2})$. Clearly $T'_{-m+2}$ has Loewy length at most $2$, so $\theta_s(T')$ has a Loewy length of at most $2+2=4$. Now define $T_{-m+4/3}$ such that $T_{-m+4/3}/T_{-m+1} \iso \soc(T_{-m+2}/T_{-m+1})$ and $T_{-m+2/3}$ such that $T_{-m+2/3}/T_{-m} \iso \rad(T_{-m+1}/T_{-m})$. As $T_{-m+4/3}/T_{-m+1}$ is semisimple, any composition factor can be written as $U/T_{-m+1}$, and similarly any composition factor of $T_{-m+1}/T_{-m+2/3}$ can be written $T_{-m+1}/W$. If there is a composition factor $U/T_{-m+1}$ which lies above $T_{-m+1}/W$, then the Loewy length of $T_{-m+2}$ is at least $6$, which is a contradiction. Thus there must be a module $Y$ such that $Y+T_{-m+1}=T_{-m+4/3}$ and $Y \cap T_{-m+1}=T_{-m+2/3}$, so that we can switch the order of these composition factors. 

This leaves us with a semisimple filtration $0=T_{-m} \leq T_{-m+1/3} \leq T_{-m+2/3} \leq Y \leq T_{-m+4/3} \leq T_{-m+5/3} \leq T_{-m+2}$, where we have continued the ``thirds'' notation suggested above. But from the results at the end of the previous section, we see that $Y/T_{-m+2/3} \iso T_{-m+4/3}/T_{-m+1}$ and $T_{-m+2/3}/T_{-m+1/3}$ have the same Kazhdan-Lusztig parity, so in fact $Y/T_{-m+1/3}$ is semisimple. Similarly $T_{-m+5/3}/Y$ is semisimple. With this in mind, we redefine the filtration $\{T_i\}$ so that its first few lower layers are $0 \leq T_{-m+1/3} \leq Y \leq T_{-m+5/3} \leq T_{-m+2}$. We continue in this manner up through the lower half of $T$, re-indexing so that the filtration layers correspond to the polynomials $t_{B,A}$. Obviously by taking orthogonal spaces this works for the upper half as well.

By induction $\midd \{T'_i\}$ is semisimple. Thus $\midd \{T_i\}$ is a summand of $\theta_s(\midd \{T'_i\})$, which is a self-dual module of Loewy length $3$ by Vogan's conjecture. If we define $V$ such that $V/T_{-1} \iso \rad(T_1/T_{-1})$ then we have $T_1/V \iso \head(T_1/T_{-1})$ so $V^\perp/T_{-1} \iso \soc(T_1/T_{-1})$, and $V \geq V^\perp$. Thus $V^\perp$ is a larger totally isotropic submodule of $T$, so we can redefine $T_1$ and $T_{-1}$ to be $V$ and $V^\perp$ respectively. The resulting filtration after all these changes has layers corresponding to the polynomials $t_{B,A}$.

In fact this new filtration is balanced semisimple. The module $T$ naturally has a Weyl filtration because $T'$ does, which we label $T_{(\lambda_k)}$. We claim that $T_{(\lambda_k)} \cap T_i$ has the following character based on a ``partial'' version of $t_{B,A}$:
\begin{equation*}
[T_{(\lambda_k)} \cap T_i:L_l(\lambda)]=\left(\sum_{j \leq k} n_{\lambda_j,\lambda_k} \overline{m^{\lambda_j,\lambda}}\right)_{\leq i}
\end{equation*}
where $(-)_{\leq i}$ denotes the sum of the coefficients of $v^j$ for all $j \leq i$. To see this, note that a similar result holds for the original filtration on $T$, since it was a translated version of a balanced semisimple filtration on $T'$. The modifications made to this filtration don't change the fact that composition factors in the layers $T^i$ can be identified as belonging to some Weyl subquotient.

The induced filtration on $T_{(\lambda_k)}/T^{(\lambda_{k-1})}$ has $i$th layer
\begin{align*}
\frac{(T_i\cap T_{(\lambda_k)}+T_{(\lambda_{k-1})})/T_{(\lambda_{k-1})}}{(T_{i-1}\cap T_{(\lambda_k)}+T_{(\lambda_{k-1})})/T_{(\lambda_{k-1})}}& \iso \frac{T_i\cap T_{(\lambda_k)}+T_{(\lambda_{k-1})}}{T_{i-1}\cap T_{(\lambda_k)}+T_{(\lambda_{k-1})}} \\
& \iso \frac{T_i \cap T_{(\lambda_k)}}{(T_i \cap T_{(\lambda_k)}) \cap (T_{i-1}\cap T_{(\lambda_k)}+T_{(\lambda_{k-1})})} \\
& =\frac{T_i \cap T_{(\lambda_k)}}{T_i \cap T_{(\lambda_k)} \cap T_{(\lambda_{k-1})}+T_{i-1}\cap T_{(\lambda_k)}} \\
& =\frac{T_i \cap T_{(\lambda_k)}}{T_i \cap T_{(\lambda_{k-1})}+T_{i-1}\cap T_{(\lambda_k)}}
\end{align*}
From the above, the character of this $i$th layer is
\begin{equation*}
\left(\sum_{j \leq k} n_{\lambda_j,\lambda_k} \overline{m^{\lambda_j,\lambda}}\right)_{\leq i}-\left(\sum_{j \leq k-1} n_{\lambda_j,\lambda_k} \overline{m^{\lambda_j,\lambda}}\right)_{\leq i}-\left(\sum_{j \leq k} n_{\lambda_j,\lambda_k} \overline{m^{\lambda_j,\lambda}}\right)_{\leq i-1}=(\overline{m^{\lambda_k,\lambda}})_i
\end{equation*}
which is what we want.
\end{proof}

\printbibliography
\end{document}